\newtheorem{theorem}{Theorem}[section]
\newtheorem{corollary}[theorem]{Corollary}
\newtheorem{definition}[theorem]{Definition}
\newtheorem{proposition}[theorem]{Proposition}
\def\C{\mathbb{C}}
\def\R{\mathbb{R}}
\def\Z{\mathbb{Z}}
\def\tree{\mathcal{T}}
\def\Z{\mathbb{Z}}
\def\trop{\mathbb{T}}
\title{Dissimilarity maps on trees and the representation theory of $GL_n(\C)$}
\author{Christopher Manon}
\thanks{This work was supported by the NSF fellowship DMS-0902710}
\begin{document}

\begin{abstract}
We revisit the representation theory in type $A$used previously to establish that the dissimilarity vectors of phylogenetic trees are points on the tropical Grassmannian variety.
We use a different version of this construction to show that the space of phylogenetic trees $K_n$ maps to the tropical varieties of every flag variety of $GL_n(\C).$  Using this map, we interpret the tropicalization of the semistandard tableaux basis of an irreducible representation of $GL_n(\C)$ as combinatorial invariants of phylogenetic trees.
\end{abstract}

\maketitle

\smallskip
\section{introduction}

The space $K_n$ of phylogenetic trees with $n$ leaves was introduced by Billera, Holmes and Vogtman in \cite{BHV} to give a geometric context to phylogenetic algorithms from mathematical biology.   As a space, $K_n$ is a fan, connected in codimension $1,$ with one maximal cone for each trivalent tree with $n$ leaves.

\begin{figure}[htbp]
\centering
\includegraphics[scale = 0.35]{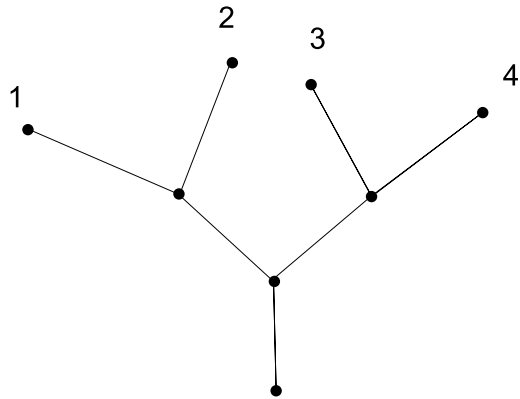}
\caption{a trivalent tree with $5$ leaves}
\label{fig:1}
\end{figure}

 A general point $\tree \in K_n$ is a tree with $n$ leaves with a specified metric, which is defined by an assignment of a non-negative real number $\ell(e)$ to each edge $e \in E(\tree).$  This defines a discrete metric on the set of leaves of $\tree.$   The pairwise distances between the leaves form a vector $d^2(\tree)$ of length $\binom{n}{2},$ which completely determines $\tree$. 

\begin{theorem}
If $d^2(\tree) = d^2(\tree')$ then $\tree = \tree'.$
\end{theorem}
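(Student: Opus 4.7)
The plan is to recover both the combinatorial tree $\tree$ and the metric $\ell$ from the distance vector $d^2(\tree)$; uniqueness of $\tree$ then follows. The central tool is the \emph{four-point condition}. For any four leaves $i,j,k,l$ of a tree, a direct path-tracing calculation in $\tree$ shows that among the three sums
\[
S_1 = d(i,j)+d(k,l),\qquad S_2 = d(i,k)+d(j,l),\qquad S_3 = d(i,l)+d(j,k),
\]
two are equal and at least as large as the third. Moreover, the strictly smaller sum singles out the unique quartet topology $ij|kl$ induced by $\tree$ on the four leaves (i.e.\ which pair is separated from which by the internal path). This information is read off directly from $d^2(\tree)$.

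With quartet topologies in hand, I would recover the full topology of $\tree$ by induction on $n$. The key observation is that a pair of leaves $\{i,j\}$ forms a \emph{cherry} (siblings at a common internal vertex) if and only if the quartet on $\{i,j,k,l\}$ is $ij|kl$ for every pair $k,l$ of other leaves. Quartets computed from $d^2(\tree)$ therefore detect cherries. Fix a cherry $\{i,j\}$, delete leaf $j$, and contract its pendant edge; the resulting tree $\tree_j$ has $n-1$ leaves, and its pairwise distance vector is completely determined by $d^2(\tree)$ (the $j$-to-others distances are simply dropped, and the remaining distances are unchanged). By induction, $\tree_j$ is determined, and re-attaching $j$ at the cherry position recovers $\tree$ as a combinatorial object.

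Once the topology is fixed, the edge lengths satisfy the linear system $d(i,j) = \sum_{e \in \mathrm{path}(i,j)} \ell(e)$. For a cherry $\{i,j\}$ with common internal vertex $v$ and any other leaf $k$,
\[
\ell(e_{iv}) \;=\; \tfrac{1}{2}\bigl( d(i,j) + d(i,k) - d(j,k)\bigr),
\]
and the remaining lengths are obtained by descending into $\tree_j$. This gives a unique solution, completing the reconstruction.

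The main obstacle is the topology-reconstruction step: one must argue that the cherry-detection rule actually finds a cherry whenever $n \geq 3$ and that the inductive reduction is consistent with $d^2$. Some care is also needed at the boundary of $K_n$, where internal edges may have length zero and the tree is not strictly trivalent; there the cherry test still selects a valid pair of leaves, but one should interpret the contracted topology as a lower-dimensional cone of $K_n$. This is essentially Buneman's classical theorem, and the above outline makes the reduction to quartets explicit.
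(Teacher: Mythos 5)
The paper states this theorem without proof; it is quoted as classical background (it is the tree-reconstruction theorem going back to Zaretskii, Smolenskii, and Buneman), so there is no in-paper argument to compare against. Your outline is the standard and correct route to it: the four-point condition read off from $d^2(\tree)$ recovers quartet topologies, quartets detect cherries, induction on $n$ recovers the combinatorial tree, and the formula $\ell(e_{iv}) = \tfrac{1}{2}\bigl(d(i,j)+d(i,k)-d(j,k)\bigr)$ together with $\ell(e_{jv}) = d(i,j)-\ell(e_{iv})$ recovers the metric. Two points need tightening before this is a complete proof in the setting of $K_n$, where edge lengths are only required to be non-negative. First, the cherry test as you state it (``the quartet is $ij|kl$ for every pair $k,l$'') presumes every quartet is strictly resolved; on lower-dimensional cones of $K_n$ some internal edges have length zero and all three sums coincide, so the test must be the non-strict one, namely $d(i,j)+d(k,l) \leq \min\bigl(d(i,k)+d(j,l),\, d(i,l)+d(j,k)\bigr)$ for all $k,l$. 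With that version, a passing pair always exists and the contraction step remains consistent, because a point of $K_n$ with a vanishing internal edge is by definition the same point as the contracted tree, so reconstructing the contracted topology is exactly what is required. Second, the induction needs its base case made explicit: for $n=3$ the three pairwise distances determine the three leaf-edge lengths by solving the linear system $d(i,j)=\ell_i+\ell_j$, whose solution is again of the form $\ell_i = \tfrac{1}{2}\bigl(d(i,j)+d(i,k)-d(j,k)\bigr)$. With these repairs your argument is complete and is the proof the paper implicitly relies on.
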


This vector is known as the $2-$dissimilarity vector of $\tree.$
Not all discrete metrics come about in this way, so it is useful to have a theorem 
which classifies those which come from $2-$dissimilarity vectors of trees, this is where tropical geometry enters the picture.  Let $\trop = \R \cup\{-\infty\}$ denote the tropical real line.  For a polynomial $f = \sum C_{\vec{m}}x^{\vec{m}},$ the tropicalization is the following partial linear form. 

\begin{equation}
T(f) = max\{\ldots,  \sum m_i x_i, \ldots\}\\
\end{equation}

\noindent
The tropical variety $tr(f)$ associated to $f$ is the set of all points $\vec{t} \in \trop^N$
which make at least two linear forms in the expression $T(f)$ maximum.  For a polynomial ideal
$I,$ the tropical variety $tr(I)$ is defined as an intersection.

\begin{equation}
tr(I) = \bigcap_{f \in I} tr(f)\\ 
\end{equation}

The following theorem of Speyer and Sturmfels \cite{SpSt} classifies those discrete metrics which
come from $2$-dissimilarity vectors of trees in $K_n$.

\begin{theorem}
A vector $v \in \R^{\binom{n}{2}}$ is the $2-$dissimilarity vector of a tree $\tree \in K_n$
if and only if it is a point on the tropical variety defined by the Pl\"ucker embedding of the Grassmannian $tr(I_{2, n}).$   In particular, it is necessary and sufficient that for any distinct indicies $i, j, k, \ell,$ two of the following expressions must be equal and larger than the third. 

$$
v_{ij} + v_{k\ell}, v_{ik} + v_{j\ell}, v_{i\ell} + v_{jk}
$$
\end{theorem}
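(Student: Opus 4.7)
The plan is to establish the theorem through three coincidences: the set of $2$-dissimilarity vectors of trees in $K_n$, the set of vectors satisfying all three-term four-point conditions, and the tropical variety $tr(I_{2,n})$ all agree. The Pl\"ucker ideal $I_{2,n}$ is classically generated by the three-term relations $p_{ij}p_{k\ell} - p_{ik}p_{j\ell} + p_{i\ell}p_{jk}$ for $i<j<k<\ell$, and tropicalizing each such generator produces exactly one instance of the stated four-point condition. This immediately gives the inclusion $tr(I_{2,n}) \subseteq \{v : \text{four-point conditions hold for every quadruple}\}$.

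For the implication ``$v = d^2(\tree)$ satisfies the four-point conditions'', I would fix a tree $\tree \in K_n$ and four leaves $i,j,k,\ell$ and pass to the minimal subtree spanning them. After relabeling, this quartet separates $\{i,j\}$ from $\{k,\ell\}$ via an internal segment of total length $\epsilon \ge 0$, with pendant paths of lengths $a_i,a_j,a_k,a_\ell$ to the four leaves. A direct calculation then yields $v_{ij}+v_{k\ell} = a_i+a_j+a_k+a_\ell$ while $v_{ik}+v_{j\ell} = v_{i\ell}+v_{jk} = a_i+a_j+a_k+a_\ell+2\epsilon$, so two sums coincide and are at least as large as the third.

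For the converse, that any vector $v$ satisfying all four-point conditions is realized as $d^2(\tree)$ for some $\tree \in K_n$, I would induct on $n$. The base cases $n\le 3$ are trivial since any three non-negative pairwise distances are realized by a tripod via a small linear system. For the inductive step, use the four-point condition to locate a cherry: a pair of leaves $i,j$ whose paths in the hypothetical tree share their last edge, which can be detected combinatorially by looking at which pairs consistently achieve the smaller of the three sums across all quadruples. Then collapse the cherry to a single leaf, verify that the reduced vector on $n-1$ coordinates still satisfies every four-point condition, apply the inductive hypothesis, and reattach the cherry with edge lengths dictated by $v_{ij}$, $v_{ik}$, $v_{jk}$ for any fixed third leaf $k$. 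Uniqueness of the reconstruction is guaranteed by Theorem 1.1. Finally, to place the resulting tree metric in $tr(I_{2,n})$ itself (not merely in the cut-out by three-term tropicalizations), exhibit a family of rank-$2$ matrices over a Puiseux-series field whose Pl\"ucker minors have valuations equal to the $v_{ab}$; the fundamental theorem of tropical geometry then forces the point into $tr(I_{2,n})$.

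The main obstacle is the reconstruction step, which is the combinatorial heart of the Buneman/Zaretskii tree-metric theorem and requires that the four-point conditions interact globally in a consistent way across all quadruples. A subsidiary but conceptually important subtlety is that $tr(I_{2,n})$ is defined as an intersection over \emph{all} $f \in I_{2,n}$, so the four-point conditions a priori only give one inclusion; the opposite inclusion is obtained by realizing each tree metric as the tropicalization of a genuine point of the Grassmannian, equivalently by invoking the Speyer--Sturmfels result that the three-term Pl\"ucker relations form a tropical basis of $I_{2,n}$.
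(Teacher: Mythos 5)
This theorem is not proved in the paper at all: it is quoted as a known result of Speyer and Sturmfels (the citation \cite{SpSt}), combining the classical four-point/tree-metric theorem with their computation of the tropical Grassmannian $tr(I_{2,n})$. Your proposal reconstructs the standard proof of that cited result, and the outline is essentially the right one: tropicalizing the three-term Pl\"ucker relations gives exactly the stated four-point conditions; the quartet computation showing $v_{ij}+v_{k\ell} = a_i+a_j+a_k+a_\ell$ versus $v_{ik}+v_{j\ell} = v_{i\ell}+v_{jk} = a_i+a_j+a_k+a_\ell+2\epsilon$ is correct; and you correctly isolate the two genuinely nontrivial inputs, namely the cherry-picking reconstruction (Buneman/Zaretskii) and the fact that the three-term relations form a tropical basis of $I_{2,n}$ (equivalently, the Puiseux-series lift), without which the four-point locus is only known to contain $tr(I_{2,n})$. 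Since the paper offers nothing beyond the citation, your sketch is strictly more informative, though both of your load-bearing steps are invoked rather than carried out.

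One substantive caution on the reconstruction step: the four-point conditions are invariant under replacing $v_{ij}$ by $v_{ij}+c_i+c_j$ for an arbitrary vector $c$, which corresponds to changing pendant edge lengths by arbitrary real amounts. Consequently the set cut out by the four-point conditions contains tree metrics whose pendant edges have negative length, and your cherry-reattachment step (solving for edge lengths from $v_{ij}, v_{ik}, v_{jk}$) can produce such negative values. The four-point condition forces only the \emph{internal} edge lengths to be non-negative. So the equivalence as literally stated, with $K_n$ defined by non-negative lengths on all edges as in the paper's introduction, requires either allowing real pendant lengths or reading $tr(I_{2,n})$ modulo the lineality space; your induction should make explicit which convention it is using, since otherwise the ``only if'' direction of the reconstruction fails on examples like $v_{ij}=-1$ for all $i,j$.
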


\noindent
In particular, the $2-$dissimilarity vectors define a map, which is $1-1$ and onto.  

\begin{equation}
d^2: K_n \to tr(I_{2, n})\\
\end{equation}

This lead  Pachter and Speyer \cite{PS} to consider the $m-$dissimilarity vectors $d^m(\tree)$
of metric trees, which are defined in a similar manner. 

\begin{definition}
Let $\sigma \subset [n]$ be a set of $m$ indices.  For a tree $\tree \in K_n$ define
$d_{\sigma}(\tree)$ to be the sum of the lengths of all edges which appear in the convex hull of the indices $\sigma.$   Define $d^m(\tree)$ to be the $\binom{n}{m}$ vector with entries $d_{\sigma}(\tree).$
\end{definition}

\begin{figure}[htbp]
\centering
\includegraphics[scale = 0.35]{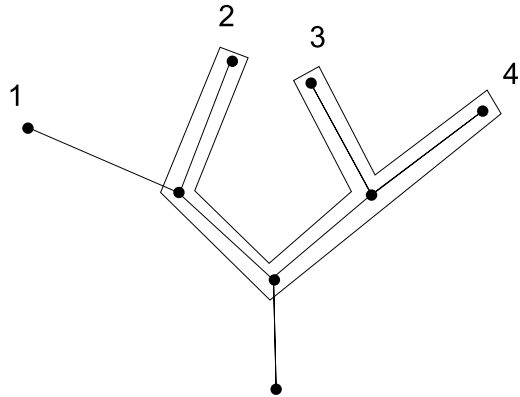}
\caption{The convex hull of three leaves}
\label{fig:2}
\end{figure}

Speyer, Pachter and later Cools conjectured a relationship between $d^m(\tree)$ and the higher tropical Grassmannian varieties.
Speyer and Pachter also showed that the $m-$dissimilarity vector of an $n-$tree $\tree$ determines
$\tree$ if $n \geq 2m-1.$

\begin{theorem}\label{dissimold}
The point $d^m(\tree)$ lies on the tropical Grassmannian $tr(I_{m, n}),$ in particular 
$d^m(\tree)$ is a solution to the tropicalization $tr(f)$ of each polynomial from the $(m, n)-$ Pl\"ucker ideal $f \in I_{m, n}.$ 
\end{theorem}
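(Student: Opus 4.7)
The plan is to realize $d^m(\tree)$ as a positive edge-indexed sum of simpler ``split vectors'' and then produce a witness that this sum lies in $tr(I_{m,n})$. For each edge $e \in E(\tree)$, removing $e$ splits the leaves into two nonempty sets $A_e \sqcup B_e = [n]$. Let $\chi_e \in \R^{\binom{n}{m}}$ be the $0$--$1$ vector with $\chi_e(\sigma) = 1$ when $\sigma$ meets both $A_e$ and $B_e$, and $\chi_e(\sigma) = 0$ otherwise. Since $e$ lies in the convex hull of $\sigma$ in $\tree$ precisely when $\chi_e(\sigma) = 1$, one has
\[ d^m(\tree) = \sum_{e \in E(\tree)} \ell(e)\, \chi_e. \]

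A first step is to verify that each $\chi_e$ is itself a tropical Pl\"ucker vector. I would do this by exhibiting $\chi_e$ as the valuation vector of the Pl\"ucker coordinates of a classical $m$-plane $V_e \subset K^n$, where $K$ is a field with non-trivial valuation and $V_e$ is in general position with respect to a decomposition $K^n = K^{A_e} \oplus K^{B_e}$ in which the $B_e$-summand has been rescaled by a uniformizer.

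Since the tropical Grassmannian need not be closed under non-negative real combinations for $m \geq 3$, summing over edges requires a more unified construction. I would try to build a single $m$-plane $V_\tree \subset K^n$ by layering edge-wise degenerations in an order compatible with the tree, arranged so that each Pl\"ucker minor $p_\sigma(V_\tree)$ has valuation exactly $d_\sigma(\tree)$. An alternative aligned with the paper's representation-theoretic strategy is to build a weight valuation $\nu_\tree$ directly on the homogeneous coordinate ring of the Grassmannian through the semistandard tableau basis, with tableaux weighted by lengths of edges in $\tree$. Once such a valuation is in hand, the tropical Pl\"ucker relations hold automatically: applied to any $f = \sum c_\alpha p^\alpha$ in $I_{m,n}$, the valuation axioms force the minimum valuation among the monomials $p^\alpha$ to be attained at least twice.

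The principal obstacle is matching the combinatorics of $\tree$ with the cancellations inside the Pl\"ucker minors. One must set up the one-parameter family defining $V_\tree$ so that, for every $\sigma$, exactly one monomial of $p_\sigma(V_\tree)$ survives in the leading term, with exponent $\sum_{e} \ell(e)\chi_e(\sigma) = d_\sigma(\tree)$. On the combinatorial side this amounts to checking that the tableau weighting $\nu_\tree$ is well-defined and recovers $d_\sigma$ on each Pl\"ucker generator.
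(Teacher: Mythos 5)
There is a genuine gap: your write-up is a plan that stops exactly where the proof has to begin. The final step you invoke --- that once a valuation $v$ with $v(z_\sigma)=d_\sigma(\tree)$ exists on the Pl\"ucker algebra, every $f\in I_{m,n}$ tropicalizes correctly because a relation cannot have a unique extremal monomial --- is precisely Theorem \ref{trop} of the paper, and it is the easy half. The hard half is the existence of such a valuation (or of a realizing plane $V_\tree$ over a valued field), and you name this as ``the principal obstacle'' without resolving it. In the paper's cited proof this is done by presenting $P_{m,n}=\bigoplus_{\vec{r}}[V(r_1\omega_1^*)\otimes\cdots\otimes V(r_n\omega_1^*)]^{SL_m(\C)}$ inside the branching algebra $R(\delta_{n-1})$ of the diagonal $SL_m(\C)\to SL_m(\C)^{n-1}$, factoring $\delta_{n-1}$ along the tree $\tree$ to get a multifiltration indexed by intermediate dominant weights at the internal edges, and pairing it with the edge-length functionals; verifying $v_\tree(z_\sigma)=d_\sigma(\tree)$ is then a concrete branching computation. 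None of that (nor any substitute for it) appears in your proposal. Note also that your ``alternative aligned with the paper's representation-theoretic strategy'' --- a valuation built from the semistandard tableau basis and a rooted tree --- is what the present paper actually carries out for $GL_n(\C)$, and it only produces the rooted components $d_{0,\sigma}$; the author explicitly states this is \emph{not} a re-proof of Theorem \ref{dissimold}. So that route, as described, proves a different statement.

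Two further points on the first half of your plan. First, you correctly observe that $tr(I_{m,n})$ is not closed under nonnegative combinations for $m\geq 3$, so the decomposition $d^m(\tree)=\sum_e \ell(e)\chi_e$ together with $\chi_e\in tr(I_{m,n})$ proves nothing; that half of the argument therefore does no work and could be deleted. Second, even the claim that your recipe realizes $\chi_e$ is off: taking a generic $m$-plane and rescaling the $K^{B_e}$-summand by a uniformizer multiplies $p_\sigma$ by $t^{|\sigma\cap B_e|}$, so the resulting vector is $\sigma\mapsto |\sigma\cap B_e|$, which lies in the lineality space $\{\sigma\mapsto\sum_{i\in\sigma}a_i\}$ and differs from $\chi_e(\sigma)=1-\mathbb{1}[\sigma\subseteq A_e]-\mathbb{1}[\sigma\subseteq B_e]$ by a vector \emph{outside} that lineality space. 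Realizing $\chi_e$ itself requires a genuinely degenerate plane (e.g.\ one whose $A_e$- and $B_e$-columns each span only an $(m-1)$-dimensional space modulo the uniformizer), and arranging all such degenerations simultaneously and compatibly along the tree is, again, the whole theorem.
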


This theorem was proved by the author in \cite{M1} and Giraldo in \cite{G} with notably different techniques.  Our solution linked the combinatorics of the $m-$dissimilarity vector to the structure of the $(m, n)-$Pl\"ucker algebra as a representation of the special linear group $SL_m(\C).$  We refer the reader to the book of Fulton and Harris for the basics of the representation theory of the special linear group.   Letting $\omega_1$ be the first fundamental weight of $SL_m(\C),$ and for a weight $\lambda$ let $V(\lambda)$ be the corresponding representation, we have the following expression. 

\begin{equation}
P_{m, n} = \bigoplus_{\vec{r} \in \Z_{\geq 0}} [V(r_1\omega_1^*)\otimes \ldots \otimes V(r_n\omega_1^*)]^{SL_m(\C)}\\
\end{equation}

In some sense this is not the natural presentation of the Pl\"ucker algebra as an algebra with representation-theoretic meaning. It arrives by the first fundamental theorem of invariant theory, but a more natural way to obtain the Pl\"ucker algebra is as the projective coordinate ring of $Gr_m(\C^n)$ with its structure as a $GL_n(\C)$ variety. 

\begin{equation}
P_{m, n} = \bigoplus_{N \in \Z_{\geq 0}} V(N\omega_m)\\
\end{equation}

The purpose of this note is to establish tropical properties of dissimilarity vectors of 
trees using this different representation theoretic point of view.  Along the way we will show that not only the Grassmannian varieties, but every variety with $GL_n(\C)$-symmetry carries 
a map from Billera-Vogtman-Holmes tree space to its tropical varieties.   We employ the same method used in \cite{M2}, the language of branching algebras and branching valuations.  Our methods are applicable to reductive groups of other types and we intend to work out the space analagous to $K_n$ for type $D$ in a forthcoming publication.

\subsection{tropical structure of dissimilarity vectors}

In \cite{M2} our method was to construct the Billera-Holmes-Vogtman space of phylogenetic trees $K_n$ as a subfan of the valuations on the Pl\"ucker algebra $P_{m, n}.$  We then employed the following theorem from tropical geometry, for a commutative algebra $A$ we denote the space of valuations on $A$ into the tropical line $\trop$ by $\mathbb{V}_{\trop}(A).$ 

\begin{theorem}\label{trop}
For any set $x_1, \ldots x_n \in A,$ and any valuation $v \in \mathbb{V}_{\trop}(A),$
the point $(v(x_1), \ldots, v(x_n)) \in \trop^n$ lies in the tropical variety of the ideal of forms $I$ which vanish on $x_1, \ldots, x_n$ in $A.$

\begin{equation}
v(\vec{x}) \in tr(I)\\
\end{equation}

\end{theorem}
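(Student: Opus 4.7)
My plan is to derive the tropical variety membership directly from the valuation axioms, applied to the relation $f(x_1,\ldots,x_n)=0$ that holds in $A$ for every $f \in I$. I would fix an arbitrary polynomial $f = \sum_{\vec{m}} C_{\vec{m}} x^{\vec{m}}$ in $I$. Since $f$ vanishes on $x_1,\ldots,x_n$, substituting the $x_i$ produces $0 \in A$, so applying the valuation yields $v\big(\sum_{\vec{m}} C_{\vec{m}} x_1^{m_1}\cdots x_n^{m_n}\big) = v(0) = -\infty$.

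The next step is to unpack this using the two axioms of a tropical valuation. Multiplicativity gives $v(C_{\vec{m}} x_1^{m_1}\cdots x_n^{m_n}) = v(C_{\vec{m}}) + \sum_i m_i v(x_i)$, and subadditivity applied to the whole sum gives
\begin{equation*}
-\infty \;=\; v\Big(\sum_{\vec{m}} C_{\vec{m}} x^{\vec{m}}\Big) \;\leq\; \max_{\vec{m}}\Big( v(C_{\vec{m}}) + \sum_i m_i v(x_i)\Big).
\end{equation*}
Assuming (as is standard in this setting) that $v$ is trivial on the coefficient field, the right-hand side is exactly $T(f)$ evaluated at $(v(x_1),\ldots,v(x_n))$. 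I would then invoke the familiar sharpening of subadditivity: whenever the maximum on the right is attained by a unique monomial, the inequality must in fact be equality. Since the left-hand side is $-\infty$, the maximum cannot be attained uniquely at a finite value, so either every term contributes $-\infty$ (in which case all the tropical linear forms are trivially tied at $-\infty$) or at least two distinct monomials achieve the same finite maximum. In either case the point $v(\vec{x})$ lies in $tr(f)$, and since $f \in I$ was arbitrary we conclude $v(\vec{x}) \in \bigcap_{f \in I} tr(f) = tr(I)$.

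The main obstacle is not conceptual but bookkeeping: one must be careful about valuations which are nontrivial on the coefficient field, and about the degenerate case in which several of the $v(x_i)$ equal $-\infty$, so that the quantities $v(C_{\vec{m}}) + \sum_i m_i v(x_i)$ must be interpreted in $\trop$ rather than in $\R$. With the standard conventions that $-\infty + c = -\infty$ and that $-\infty$ terms enter the maximum in the obvious way, the subadditivity step and the uniqueness-of-maximum step still go through verbatim, and the dichotomy above still forces $v(\vec{x})$ into $tr(f)$ for every $f \in I$.
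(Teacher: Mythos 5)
Your argument is correct, but there is essentially nothing in the paper to compare it against: Theorem \ref{trop} is quoted there as a known fact from tropical geometry (it is the easy containment in the fundamental theorem of tropical geometry, in the form used by Payne \cite{P} and in \cite{M2}), and the paper supplies no proof. What you write is the standard argument: apply $v$ to the identity $f(x_1,\ldots,x_n)=0$ in $A$, expand each monomial by multiplicativity, and invoke the ultrametric sharpening of $v(a+b)\leq\max(v(a),v(b))$ --- equality whenever the maximum is attained by exactly one summand --- to force the maximum of the tropicalized monomials at $(v(x_1),\ldots,v(x_n))$ to be attained at least twice. Two remarks on the bookkeeping. First, the paper's $T(f)=\max\{\ldots,\sum m_i x_i,\ldots\}$ carries no coefficient terms, so triviality of $v$ on $\C$ is built into the statement rather than being an extra hypothesis; you are right to flag it. Second, your degenerate branch is slightly too generous: if $f\in I$ were a single monomial, then $T(f)$ consists of one linear form, no point can make ``at least two forms maximum,'' and $tr(f)=\emptyset$, so the statement as literally written would fail; and when all terms evaluate to $-\infty$ one needs the convention that a tie at $-\infty$ counts. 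Both issues evaporate under the hypotheses implicit in the paper's setting --- the $x_i$ are nonzero elements of a domain and $v^{-1}(-\infty)=\{0\}$, so the ideal $I$ is prime and monomial-free and every $v(x_i)$ is finite, which in fact eliminates the $-\infty$ branch entirely. With that understood, your proof is complete and is precisely the argument the paper is implicitly relying on.
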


 It was then shown that evaluating the valuation $v_{\tree}$ corresponding to a metric tree $\tree \in K_n$ on the Pl\"ucker generators $z_{\sigma} \in P_{m, n}$ yielded the $m-$dissimilarity vector of $\tree,$ 

\begin{equation}
v_{\tree}(z_{\sigma}) = d_{\sigma}(\tree)\\
\end{equation}

\noindent
The valuations $v_{\tree}$ were constructed for $d^m(\tree)$ as a special case of a general method to construct valuations on algebras which capture branching information for morphisms of reductive groups.  As a sum of invariants in $n-$fold tensor products of irreducible $SL_m(\C)$ representations, the Pl\"ucker algebra can be realized as a subalgebra of the full tensor algebra of $SL_m(\C).$

\begin{equation}
P_{m, n} \subset R(\delta_{n-1})\\
\end{equation}

\noindent
This algebra captures the branching data for $\delta_{n-1}: SL_m(\C) \to SL_m(\C)^{n-1},$
the diagonal embedding, and it has all $n-$fold tensor products of irreducible $SL_m(\C)$ representations as graded summands. 

\begin{equation}
R(\delta_{n-1}) = \bigoplus_{\vec{\lambda} \in \Delta^n} Hom_{SL_m(\C)}(V(\lambda_0), V(\lambda_1) \otimes \ldots \otimes V(\lambda_{n-1}))\\
\end{equation}

In general, there is a multigraded algebra $R(\phi)$ assigned to a morphism of reductive groups $\phi:H \to G$ which algebraically encodes the problem of restricting representations from $G$ to $H,$
we will discuss these algebras and their deformations in section \ref{br}.    In \cite{M2} and \cite{M1} it was shown that each factorization  $\phi = \phi_1 \circ \ldots \circ \phi_m$ of $\phi$ in the category of reductive groups yields a cone $B(\vec{\phi}) \subset \mathbb{V}_{\trop}(R(\phi)).$  When coupled with Theorem \ref{trop} above, this gives a method for producing complexes of points on the tropical varieties associated to $R(\phi)$ and its sub-algebras, which inherit the combinatorial structure of the category of reductive groups. 
In particular, the diagonal morphism $\delta_{n-1}: SL_m(\C) \to SL_m(\C)^{n-1}$ comes with a factorization for every tree on $n$ leaves, given by other diagonal morphisms, see \cite{M2}.

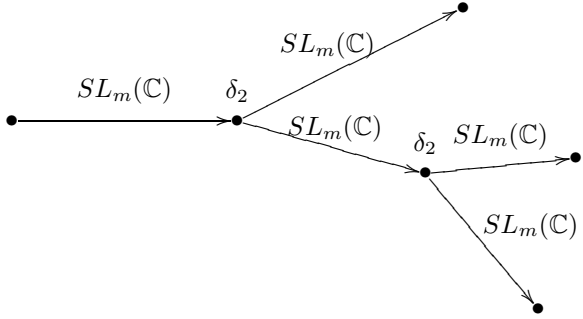
\begin{figure}[htbp]
\centering
\begin{xy}
(-15, 4)*{SL_m(\C)};
(12, 10)*{SL_m(\C)};
(35, -2)*{SL_m(\C)};
(39, -14)*{SL_m(\C)};
(13, -1)*{SL_m(\C)};
(0,4)*{\delta_2};
(25, -3 )*{\delta_2};
(0,0)*{\bullet} = "AA";
(30,15)*{\bullet} = "BB";
(45,-5)*{\bullet} = "CC";
(40,-25)*{\bullet} = "DD";
(-30,0)*{\bullet} = "EE";
(25,-7)*{\bullet} = "FF";
"BB"; "AA";**\dir{-}? >* \dir{>};
"AA"; "EE";**\dir{-}? >* \dir{>};
"FF"; "AA";**\dir{-}? >* \dir{>};
"DD"; "FF";**\dir{-}? >* \dir{>};
"CC"; "FF";**\dir{-}? >* \dir{>};
\end{xy}
\caption{a factorization of $\delta_3$}
\end{figure}

This gives a cone $C(\tree)$ for each such tree, and the valuations $v_{\tree}$ are 
special points in this cone, chosen for their desireable properties when evaluated on the Pl\"ucker generators. In this way the factorization properties of diagonal maps realize dissimilarity vectors of trees by valuations on the Pl\"ucker algebra.  This is the essence of the proof of Theorem \ref{dissimold} from \cite{M2}.  Notably this produces many more valuations then just those coming from $K_n,$ presumably these hide interesting combinatorial information about the trees $\tree,$ the Pl\"ucker algebra $P_{m, n}$ and other algebras inside the full tensor algebra. 

\subsection{factorization by general linear subgroups of $GL_n(\C)$}

 The Pl\"ucker algebra can also be constructed as a subalgebra of the branching algebra $R(1_G),$ associated to the identity morphism $1_G:1 \to GL_n(\C).$

\begin{equation}
R1_G) = \bigoplus_{\lambda \in \Delta} V(\lambda)\\
\end{equation}

\noindent
Multiplication in this algebra is computed via Cartan multiplication, $V(\lambda)\otimes V(\eta) \to V(\lambda + \eta).$  This is the coordinate ring of $GL_n(\C)/U_+,$ where $U_+$ is the subgroup of upper triangular unipotent matrices.  This is also the Cox ring of the full flag variety, in particular the projective coordinate ring of any flag variety $GL_n(\C)/P$, for any ample line bundle $\mathcal{L}(\lambda)$ sits inside $R(1_G)$ as a subalgebra.

\begin{equation}
R_{\lambda} = \bigoplus_{N \geq 0} V(N\lambda) \subset R(1_G)\\
\end{equation}

\noindent
In this way we obtain the Pl\"ucker algebra is a subalgebra of $R(1_G)$ as $P_{m, n} = R_{\omega_m}.$    This allows us to once again analyze some tropical geometry of the Grassmannian, and indeed any flag variety in type $A,$ using the branching construction on factorizations of $1 \to GL_n(\C).$  In particular we can obtain a cone of valuations in $\mathbb{V}_{\trop}(R_{\omega_m})$ for any nested sequence of subgroups of $GL_n(\C)$.  
Our main tool will be nested products of general linear groups.

 For a set of indices $\sigma \subset [n]$ of size $|\sigma| = k$ we define the subgroup $i_{\sigma}: GL_k(\C) \to GL_n(\C)$ to be the subcopy of $GL_k(\C)$ on the indices $\sigma.$  We represent this with the model matrix diagram below.

\[ \left( \begin{array}{ccccc}
a & b & 0 & c & 0\\
d & e & 0 & f & 0 \\
0 & 0 & X & 0 & Y\\
g & h & 0 & i & 0\\
0 & 0 & W & 0 & Z\\
\end{array} \right)\] 

Here an element of $GL_3(\C) \times GL_2(\C)$ on the indices 
$\{1, 2, 4\}$ and $\{3, 5\}$ respectively defines an element of $GL_5(\C).$
In this way, every $n+1$ tree $\tree$ defines a directed system of general linear groups,
where an edge $e \in E(\tree)$ defines the subgroup $i_{L(e)}(GL_k(\C)) \subset GL_n(\C)$
where $L(e)$ is the set of leaves $l$ such that the unique path between $l$ and $0$ in $\tree$ passes through $e.$

\begin{figure}[htbp]

\centering
\begin{xy}
(-15, 4)*{GL_3(\C)};
(12, 10)*{GL_1(\C)};
(35, -2)*{GL_1(\C)};
(39, -14)*{GL_1(\C)};
(13, -1)*{GL_2(\C)};
(0,0)*{\bullet} = "AA";
(30,15)*{\bullet} = "BB";
(45,-5)*{\bullet} = "CC";
(40,-25)*{\bullet} = "DD";
(-30,0)*{\bullet} = "EE";
(25,-7)*{\bullet} = "FF";
"AA"; "BB";**\dir{-}? >* \dir{>};
"EE"; "AA";**\dir{-}? >* \dir{>};
"AA"; "FF";**\dir{-}? >* \dir{>};
"FF"; "DD";**\dir{-}? >* \dir{>};
"FF"; "CC";**\dir{-}? >* \dir{>};
\end{xy}
\caption{a factorization of $1_{GL_3(\C)}$}
\end{figure}
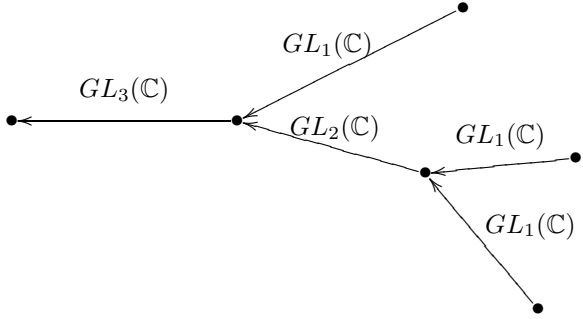

Each vertex $v \in V(\tree)$ then corresponds to a map $i_{L(e_1)}\times \ldots \times i_{L(e_j)}: GL_{k_1}(\C) \times \ldots \times GL_{k_j} \to GL_m(\C),$ where $k_i = |L(e_i)|,$ and $m = |L(e)|.$  In particular, the leaves of $\tree$ are assigned the corresponding copy of $\C^* \subset GL_n(\C)$ at the proper index in the diagonal.

Following the branching construction, we get a valuation on the algebra $R(1_G)$ by assigning a function $\rho,$ linear on the weights of the appropriate $GL_k(\C)$ to each edge of this tree. We need $\rho$ to act in such a way that every dominant weight receives a positive real value, and for every pair of dominant weights $\lambda, \eta,$ the weights appearing in $V(\lambda) \otimes V(\eta)$ receive weight less than or equal to $\rho(\lambda) + \rho(\eta) = \rho(\lambda + \eta).$  We choose the functional $\rho$ that counts the number of boxes in the top row of the Young Diagram of the associated weight $\lambda.$

\begin{figure}[htbp]
\centering
\includegraphics[scale = 0.55]{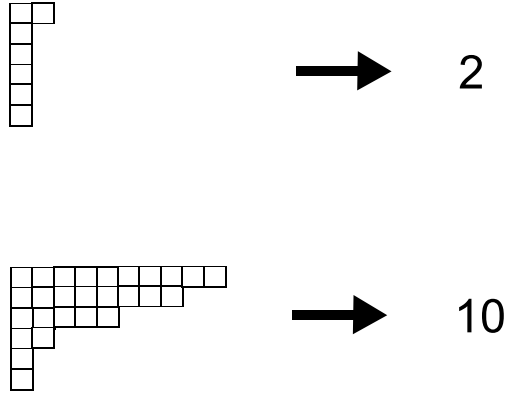}
\caption{weight of a Young tableaux}
\label{fig:6}
\end{figure}

\noindent
This particular functional has the nice property that $\rho(\omega_i) = 1$ for $i = 1, \ldots, n$
(recall that $\omega_n = det$).   In this way, a metric tree  $\tree$ with topological structure equal to the chosen structure defines a valuation $w_{\tree}$ on $R(1_G)$ by the assignment of $\ell(e)\rho$ to $e \in E(\tree).$

\subsection{branching diagrams}
In section \ref{br}  we will use this to prove the following proposition about the exterior forms, which form a basis of $V(\omega_i).$

\begin{theorem}\label{dissim}
Let $\tree$ be an $n+1$ tree, and $z_{\sigma}= z_{i_1}\wedge \ldots \wedge z_{i_m} \in \bigwedge^m(\C^n).$
Then $w_{\tree}(z_{\sigma}) = d_{0, \sigma}(\tree).$
\end{theorem}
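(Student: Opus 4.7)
The plan is to unpack the definition of $w_\tree$ on the specific weight vector $z_\sigma$ and match the resulting expression with the sum of edge lengths in the convex hull of $\{0\} \cup \sigma$.

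First I would work out the restriction of the ambient representation $V(\omega_m) = \bigwedge^m \C^n$ to the subgroup $i_{L(e)}(GL_k(\C))$ attached to an edge $e \in E(\tree)$, where $k = |L(e)|$. This subgroup acts on the coordinates indexed by $L(e)$ and trivially on the rest, so the standard decomposition of an exterior power gives
\begin{equation}
\bigwedge^m \C^n \;=\; \bigoplus_{j=0}^m \Bigl(\bigwedge^j \C^{L(e)}\Bigr) \otimes \Bigl(\bigwedge^{m-j} \C^{L(e)^c}\Bigr).
\end{equation}
The basis vector $z_\sigma = z_{i_1} \wedge \cdots \wedge z_{i_m}$ lives in the single summand indexed by $j = |\sigma \cap L(e)|$, and as a $GL_k(\C)$-representation this summand is a direct sum of copies of the irreducible $\bigwedge^j \C^k = V(\omega_j)$. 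Hence, with respect to the edge $e$, the weight vector $z_\sigma$ belongs to the irreducible type $\omega_{|\sigma \cap L(e)|}$.

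Next I would substitute this into the branching valuation of section \ref{br}. The contribution of edge $e$ to $w_\tree(z_\sigma)$ is $\ell(e)\,\rho(\omega_{|\sigma \cap L(e)|})$. Because $\rho$ records the length of the top row of the Young diagram and the diagram of $\omega_j$ is a single column of $j$ boxes, one has $\rho(\omega_j) = 1$ whenever $j \geq 1$ and $\rho(\omega_0) = 0$. Summing over edges of $\tree$,
\begin{equation}
w_\tree(z_\sigma) \;=\; \sum_{e \in E(\tree)} \ell(e)\,\rho\bigl(\omega_{|\sigma \cap L(e)|}\bigr) \;=\; \sum_{\substack{e \in E(\tree) \\ \sigma \cap L(e) \neq \emptyset}} \ell(e).
\end{equation}

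Finally I would reinterpret the non-vanishing indicator combinatorially. An edge $e$ lies in the convex hull of $\{0\} \cup \sigma$ iff removing $e$ splits $\{0\} \cup \sigma$ into two nonempty parts. Since $0 \notin L(e)$ by definition of $L(e)$, the component of $\tree \setminus e$ on the root side always contains $0$; the other component meets $\{0\} \cup \sigma$ precisely when $\sigma \cap L(e) \neq \emptyset$. The surviving edges in the display above are therefore exactly the edges of the convex hull of $\{0\} \cup \sigma$, and the sum equals $d_{0,\sigma}(\tree)$. The main obstacle is the bookkeeping at the first step: verifying that $z_\sigma$ truly lands in a single $GL_k(\C)$-irreducible component at each edge (so that the branching valuation has an unambiguous value at $z_\sigma$) and that the normalization of $\rho$ makes $\rho(\omega_i) = 1$ rather than $i$. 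Once these points are nailed down, the remaining argument is the calculation above together with the elementary remark about convex hulls.
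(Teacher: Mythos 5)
Your proposal is correct and follows essentially the same route as the paper: decompose $\bigwedge^m\C^n$ under the $GL_{|L(e)|}(\C)\times GL_{|L(e)^c|}(\C)$ splitting at each edge to see that $z_\sigma$ carries the weight $\omega_{|\sigma\cap L(e)|}$ there, apply $\rho(\omega_j)=1$ for $j\geq 1$ and $\rho(0)=0$, and observe that the surviving edges are exactly those of the convex hull of $\{0\}\cup\sigma$. The paper phrases the first step as an iterated branching diagram down the tree rather than edge-by-edge restrictions, but the resulting weight data and the rest of the argument are identical, and your explicit justification of the convex-hull step is a point the paper leaves as ``clear.''
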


Note that the vector associated to the Pl\'Ucker generators of $P_{m, n}$ by this method
is not the complete $m-$dissimilarity vector of $\tree,$ as each component always contains an index $0.$  We are not re-proving Theorem \ref{dissimold}, but we are showing that these specialized components of dissimilarity vectors are also solutions to tropical equations coming from Grassmannians.  Using the branching algebra $R(1_G)$ also offers enough flexibility to allow us to say something about general flag varieties, and even general algebras with a $GL_n(\C)$ action. 

Recall that a representation $V(\lambda)$ of $GL_n(\C)$ has a basis given by semistandard
fillings $T$ of the Young diagram associated to the weight $\lambda.$  Let $z_T$ be the basis member of $V(\lambda)$ associated to the filling $T.$   Each $T$ can be viewed as a collection of sets of indices, corresponding to the columns of $T.$

\begin{equation}
T = [\sigma_1, \ldots, \sigma_k]\\
\end{equation}

\noindent
Indeed, the element $z_T$ is the image of the tensor $z_{\sigma_1} \otimes \ldots \otimes z_{\sigma_k}$ under the unique map of representiontations.

\begin{equation}
\pi_{\lambda}:V(\omega_{|\sigma_1|})\otimes \ldots \otimes V(\omega_{|\sigma_k|}) \to V(\sum \omega_{|\sigma_i|}) = V(\lambda)\\
\end{equation}

\noindent
This map is the multiplication map in $R(1_G),$ so from general properties of valuations 
we get the following corollary.   We define $d_{0, T}(\tree) = \sum_{\sigma_i \in T} d_{0, \sigma_i}(\tree).$

\begin{corollary}\label{tab}
For any semistandard tableaux $T$ with associated element $z_T \in R(1_G)$ we have the following equation.

\begin{equation}
w_{\tree}(z_T) = d_{0, T}(\tree)\\
\end{equation} 

\end{corollary}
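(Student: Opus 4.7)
The plan is to obtain Corollary \ref{tab} as a formal consequence of Theorem \ref{dissim} combined with the multiplicativity of the valuation $w_{\tree}$. The key identification is already supplied by the displayed definition of $z_T$: the tableau basis vector is the image $\pi_{\lambda}(z_{\sigma_1} \otimes \cdots \otimes z_{\sigma_k})$ under the Cartan projection, and $\pi_{\lambda}$ is precisely the multiplication map of the algebra $R(1_G) = \bigoplus_{\lambda} V(\lambda)$. Hence inside $R(1_G)$ the tableau element factors as
\[
z_T = z_{\sigma_1} \cdot z_{\sigma_2} \cdots z_{\sigma_k},
\]
reducing the computation of $w_{\tree}(z_T)$ to a computation on the column factors.

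Given this factorization, the argument is short. The algebra $R(1_G) = \C[GL_n(\C)/U_+]$ is an integral domain, so any valuation on it is strictly multiplicative: $w_{\tree}(xy) = w_{\tree}(x) + w_{\tree}(y)$. Applying this $k-1$ times to the factorization above, and then invoking Theorem \ref{dissim} on each column factor $z_{\sigma_i}$, I would conclude
\[
w_{\tree}(z_T) = \sum_{i=1}^{k} w_{\tree}(z_{\sigma_i}) = \sum_{i=1}^{k} d_{0,\sigma_i}(\tree) = d_{0,T}(\tree),
\]
which is the claimed identity, where the last equality is the definition of $d_{0,T}(\tree)$ given just before the statement of the corollary.

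The only point that warrants care is the strict multiplicativity $w_{\tree}(xy) = w_{\tree}(x) + w_{\tree}(y)$, as opposed to the mere inequality $w_{\tree}(xy) \geq w_{\tree}(x) + w_{\tree}(y)$ enjoyed by an arbitrary filtration. This is where it matters that the branching construction of section \ref{br} produces an honest valuation rather than a filtration: the edge functional $\rho$ was chosen precisely so that $\rho(\lambda + \eta) = \rho(\lambda) + \rho(\eta)$ on weights, and this additivity on Cartan sums is what forces equality on products of the highest-weight generators $z_{\sigma_i}$. I expect this to be the only potential obstacle, and it is handled essentially for free by the general branching-valuation framework that will be set up in section \ref{br}; beyond that, the corollary is a direct repackaging of Theorem \ref{dissim}.
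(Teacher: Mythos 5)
Your proposal is correct and follows essentially the paper's own route: the paper likewise observes that $\pi_{\lambda}$ is the multiplication map of $R(1_G)$, reduces $w_{\tree}(z_T)$ to $\sum_i w_{\tree}(z_{\sigma_i})$ (via Corollary \ref{c}, which states that the branching diagram of $z_T$ is the sum of the branching diagrams of the column factors $z_{\sigma_i}$), and then applies Theorem \ref{dissim} to each column. The one imprecision is your stated reason for strict additivity of $w_{\tree}$ on this product: it does not follow from $\rho(\lambda+\eta)=\rho(\lambda)+\rho(\eta)$ alone, but from the fact that the projection of $z_{\sigma_1}\cdots z_{\sigma_k}$ onto the top component of the multifiltration agrees with the Cartan product (the general multiplication theorem of section \ref{br}) and is nonzero precisely because $T$ is semistandard, so that $z_T$ is a basis vector rather than a sum of lower-order terms.
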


This gives a map from $K_{n+1}$ to the tropical varieties of any projective coordinate
ring of any line bundle over any flag variety of type $A,$ or indeed any algebra with
a standard tableaux generating set.   We also get the following generalization of Theorem \ref{dissim}

\begin{corollary}
Let $f \in I_{\lambda}$ be an element of the ideal which vanishes on the semistandard tableaux generators of the projective coordinate ring $R_{\lambda},$ associated to $\mathcal{L}(\lambda)$ of a flag variety $GL_n(\C)/P(\lambda).$ Then the tableaux dissimilarity components $d_{0, T_i},$ as $T_i$ runs over the basis of $V(\lambda),$ satisfy $T(f).$
\end{corollary}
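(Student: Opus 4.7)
The plan is to observe that this corollary is essentially a formal consequence of Corollary \ref{tab} combined with the general tropical principle from Theorem \ref{trop}. First I would set up the data: the algebra in question is $R_{\lambda} = \bigoplus_{N \geq 0} V(N\lambda)$, which is the projective coordinate ring of $GL_n(\C)/P(\lambda)$ under the ample bundle $\mathcal{L}(\lambda)$, and it sits as a subalgebra of $R(1_G)$. The set of semistandard tableaux $\{T_i\}$ on the Young diagram of $\lambda$ indexes a basis $\{z_{T_i}\}$ of $V(\lambda)$, and since $V(\lambda)$ is the degree-one graded component of $R_{\lambda}$, these elements generate $R_{\lambda}$ as an algebra. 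By definition $I_{\lambda}$ is the ideal of polynomial relations among the $z_{T_i}$.

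Next, since $w_{\tree}$ is a valuation on $R(1_G)$ and $R_{\lambda} \subset R(1_G)$ is a subalgebra, its restriction gives a valuation on $R_{\lambda}$, an element of $\mathbb{V}_{\trop}(R_{\lambda})$. By Corollary \ref{tab} the value of this valuation on the generator $z_{T_i}$ is exactly the tableaux dissimilarity component $d_{0, T_i}(\tree)$.

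At this point I would simply invoke Theorem \ref{trop}: applied to the algebra $A = R_{\lambda}$, the elements $x_i = z_{T_i}$, and the valuation $v = w_{\tree}$, it gives that
\begin{equation}
(w_{\tree}(z_{T_1}), \ldots, w_{\tree}(z_{T_N})) = (d_{0, T_1}(\tree), \ldots, d_{0, T_N}(\tree)) \in tr(I_{\lambda}).
\end{equation}
Since by definition $tr(I_{\lambda}) = \bigcap_{f \in I_{\lambda}} tr(f)$, the vector of dissimilarity components satisfies $T(f)$ for every $f \in I_{\lambda}$, as desired.

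There is no real obstacle: the only point requiring care is the identification of $R_{\lambda}$ as a subalgebra of $R(1_G)$ so that the restricted valuation is still a genuine valuation (which is immediate) and the identification of the $z_{T_i}$ as generators of $R_{\lambda}$ corresponding to the ideal $I_{\lambda}$ in the statement. Once these bookkeeping facts are in place, the result is a direct application of the two previously established theorems.
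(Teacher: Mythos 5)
Your proposal is correct and follows essentially the same route as the paper: the corollary is a direct consequence of Corollary \ref{tab} (which identifies $w_{\tree}(z_{T_i})$ with $d_{0,T_i}(\tree)$) together with the general principle of Theorem \ref{trop} applied to the subalgebra $R_{\lambda} \subset R(1_G)$ with generating set $\{z_{T_i}\}$. The paper treats this as an immediate formal consequence and your bookkeeping of the identifications matches its intent exactly.
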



In \cite{M2} we showed a very general result on $G-$algebras, Theorem 3.5.  A consequence
of this is that any complex of valuations arrising from the branching valuation construction
on $R(1_G)$ also passes to a complex of valuations on an algebra $A$ with a rational $GL_n(\C)$ action.  

\begin{theorem}\label{dissimgen}
Let $A$ be a commutative algebra with a rational action by $GL_n(\C),$ then there is a map
$K_{n+1} \to \mathbb{V}_{\trop}(A).$
\end{theorem}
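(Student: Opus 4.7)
The plan is to leverage the family of valuations $w_{\tree}$ on $R(1_G)$ already constructed in earlier sections, and then apply the transfer mechanism promised by Theorem~3.5 of \cite{M2}. Concretely, for each $\tree \in K_{n+1}$ we have produced $w_{\tree} \in \mathbb{V}_{\trop}(R(1_G))$ by assigning the edge-length weighted functional $\ell(e)\rho$ at each edge, and the assignment $\tree \mapsto w_{\tree}$ is piecewise linear on $K_{n+1}$. The task is therefore entirely to construct, for each $w_{\tree}$, a corresponding valuation $v_{\tree}^{A} \in \mathbb{V}_{\trop}(A)$, in a way that depends piecewise linearly on the input.

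First I would reduce to the algebra of $U_+$-invariants. Since the $GL_n(\C)$-action on $A$ is rational, $A$ decomposes into isotypic components $A = \bigoplus_{\lambda} A_{\lambda}$, and the subalgebra $A^{U_+}$ of highest weight vectors inherits a $\Lambda$-grading by the maximal torus. The key observation is that $A^{U_+}$ is naturally identified with the $GL_n(\C)$-invariants of $A \otimes R(1_G)$, where $R(1_G)$ is regarded with its dual $GL_n(\C)$-action and the graded piece of weight $\lambda$ recovers $\mathrm{Hom}_{GL_n(\C)}(V(\lambda), A)$. Under this identification, the tensor of $w_{\tree}$ on the $R(1_G)$ factor with the trivial valuation on $A$ descends, after restriction to invariants, to a weight-graded valuation $v_{\tree}^{U}$ on $A^{U_+}$. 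Multiplicativity of $v_{\tree}^{U}$ follows from the fact that Cartan multiplication on $R(1_G)$ is compatible with $w_{\tree}$ (this is exactly the property ensured by our choice of $\rho$ counting boxes in the top row), together with the $GL_n$-equivariance of multiplication in $A$.

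Next I would extend $v_{\tree}^{U}$ from $A^{U_+}$ to all of $A$. For $a \in A_{\lambda}$, one writes $a = \sum_i c_i \cdot a_i$ where each $a_i$ is in the $GL_n(\C)$-orbit of a chosen highest weight vector $a_i^{h} \in A^{U_+}_{\lambda}$, and sets $v_{\tree}^{A}(a) = \min_i v_{\tree}^{U}(a_i^{h})$, extended to general $a$ by taking the minimum over isotypic components. The fact that this defines a bona fide valuation—rather than merely a function satisfying the non-Archimedean inequality on $A^{U_+}$—is the content of Theorem~3.5 of \cite{M2}, which identifies the category of weight-graded valuations on $A^{U_+}$ with $GL_n(\C)$-equivariant valuations on $A$. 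The final check is that the resulting assignment $\tree \mapsto v_{\tree}^{A}$ is piecewise linear on each maximal cone of $K_{n+1}$, which is immediate since $\tree \mapsto w_{\tree}$ is and all subsequent constructions are either linear (tensor product, restriction to invariants) or take minima over finite sets (extension from $A^{U_+}$).

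The principal obstacle is the well-definedness in the final step: different decompositions of $a \in A_{\lambda}$ into $GL_n(\C)$-translates of highest weight vectors must give the same value. This is precisely where the $GL_n(\C)$-equivariance of $v_{\tree}^{A}$ is forced, and it rests on the torus-equivariance built into $w_{\tree}$ together with the semisimplicity of $GL_n(\C)$-representations. Once this is verified the theorem follows, and moreover the construction is functorial in $A$, so that $GL_n(\C)$-equivariant algebra maps $A \to B$ pull back valuations as expected.
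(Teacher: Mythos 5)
Your proposal follows the same route as the paper: Theorem \ref{dissimgen} is obtained there by combining the tree valuations $w_{\tree}$ on $R(1_G)$ with the transfer result, Theorem 3.5 of \cite{M2}, which is exactly the two-ingredient argument you give, and the paper itself supplies no more detail than that citation. Your unpacking of the transfer via $A^{U_+} \cong [A \otimes R(1_G)]^{GL_n(\C)}$ is a reasonable reconstruction of what that cited theorem does; the only caveat is that under the paper's max-plus convention for $\trop$ the value assigned to a sum of isotypic components should be a maximum rather than the minimum you wrote.
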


Just as each $R_{\lambda}$ defined its own set of tropical invariants of trees $\tree \in K_{n+1}$ in the form of the tropicalizations of semi-standard tableaux, any generating set $X \subset A$ of such an algebra will define a set of tropical invariants of $\tree$ which satisfy the tropicalized equations in the ideal defined by the presentation of $A$ by $X.$

\section{overview of branching valuations}\label{br}

Here we give an overview of the essential ideas used to proved Theorem \ref{dissim}.  
For more on branching valuations we direct the reader to \cite{M2} and \cite{M1}.  
For a map of reductive groups $\phi: H \to G$ we define a commutative algebra
over $\C.$

\begin{equation}
R(\phi) = [R_H \otimes R_G]^H\\
\end{equation}

\noindent
Here invariants are defined with respect to the action of $H$ on $R_H$
and $H$ on $R_G$ through $\phi.$  As a multigraded vector space, this algebra
can be formulated as follows, 

\begin{equation}
R(\phi) = \bigoplus_{\eta, \lambda \in \Delta_H\times \Delta_G} Hom_H(V(\eta), V(\lambda))\\
\end{equation}

\noindent
where the sum runs over all pairs of dominant $H$ and $G$ weights.  Now we factor
$\phi$ by a pair of morphisms in the category of reductive groups. 

$$
\begin{CD}
H @>\psi>> K @>\pi>> G\\
\end{CD}
$$

\noindent
From this factorization we may refine the multigrading presented above. 

\begin{equation}
Hom_H(V(\eta), V(\lambda)) = \bigoplus_{\tau \in \Delta_K}Hom_H(V(\eta), V(\tau)) \otimes Hom_K(V(\eta), V(\lambda))\\
\end{equation}

This is a formal consequence of the semisimplicity of the categories of finite dimensional representations
of reductive groups.  Notice that the components on the right hand side can be viewed as summands
in $R(\psi) \otimes R(\pi).$  We simplify notation a little and rename the above summands.

\begin{equation}\label{decompose}
W(\eta, \tau, \lambda) = Hom_H(V(\eta), V(\tau)) \otimes Hom_K(V(\eta), V(\lambda))\\
\end{equation}

\noindent
The following theorem can be found in \cite{M1} and \cite{M2}.

\begin{theorem}
The following holds under multiplication in the algebra $R(\phi).$
\begin{equation}
W(\eta_1, \tau_1, \lambda_1) \times W(\eta_2, \tau_2, \lambda_2) \subset \bigoplus_{\tau \leq \tau_1 + \tau_2} W(\eta_1 + \eta_2, \tau, \lambda_1 + \lambda_2)\\ 
\end{equation}

\noindent
Moreover, when projected onto the highest weight component on the right hand side, the resulting multiplication operation 
agrees with multiplication of the corresponding components in $R(\psi)\otimes R(\pi).$
\end{theorem}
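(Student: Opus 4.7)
The plan is to unwind the identification $R(\phi) \cong \bigoplus_{\eta,\lambda} Hom_H(V(\eta), V(\lambda))$ and track how multiplication in $R(\phi) = [R_H \otimes R_G]^H$ interacts with the refined decomposition (\ref{decompose}). Multiplication on the homomorphism components is obtained by tensoring $f_1$ and $f_2$ and then sandwiching with the Cartan inclusion $V(\eta_1+\eta_2) \hookrightarrow V(\eta_1) \otimes V(\eta_2)$ of $R_H$ on the source and the Cartan projection $V(\lambda_1) \otimes V(\lambda_2) \twoheadrightarrow V(\lambda_1+\lambda_2)$ of $R_G$ on the target. In particular it preserves the $(\eta,\lambda)$-grading additively, so $W(\eta_1,\tau_1,\lambda_1) \cdot W(\eta_2,\tau_2,\lambda_2)$ is automatically contained in $Hom_H(V(\eta_1+\eta_2), V(\lambda_1+\lambda_2))$, and the real question is which $K$-types $\tau$ contribute.

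Using (\ref{decompose}), an element $f_i \in W(\eta_i,\tau_i,\lambda_i)$ factors through the $V(\tau_i)$-isotypic piece of $V(\lambda_i)|_K$, so $f_1 \otimes f_2$ factors through $V(\tau_1) \otimes V(\tau_2) \subset V(\lambda_1) \otimes V(\lambda_2)$. The $K$-equivariance of the Cartan map $V(\lambda_1) \otimes V(\lambda_2) \twoheadrightarrow V(\lambda_1+\lambda_2)$ then forces the $K$-types appearing in the image of $f_1 \cdot f_2$ to come from those appearing in the $K$-decomposition of $V(\tau_1) \otimes V(\tau_2)$. By the standard fact that $V(\tau_1) \otimes V(\tau_2) = \bigoplus_{\tau \leq \tau_1 + \tau_2} V(\tau)^{\oplus m^{\tau}_{\tau_1,\tau_2}}$, with $V(\tau_1 + \tau_2)$ appearing exactly once and every other summand strictly below $\tau_1 + \tau_2$ in the dominance order, the product is supported on the claimed sum.

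For the highest $K$-component I would pick highest $K$-weight vectors $v_{\tau_i}$ in the chosen $V(\tau_i)$-copies inside $V(\lambda_i)|_K$; then $v_{\tau_1} \otimes v_{\tau_2}$ is a highest $K$-weight vector of $K$-weight $\tau_1 + \tau_2$ in $V(\tau_1) \otimes V(\tau_2)$, and its image under the $G$-Cartan map generates the unique copy of $V(\tau_1 + \tau_2)$ inside $V(\lambda_1+\lambda_2)|_K$ coming from this construction. Combining this with the parallel statement for the $H$-Cartan inclusion on the $Hom_H$ side identifies the projection of $f_1 \cdot f_2$ to the $\tau = \tau_1 + \tau_2$ component with the separate Cartan products computed in $R(\psi)$ on the $Hom_H$ factors and in $R(\pi)$ on the $Hom_K$ factors, which is exactly the tensor-product multiplication rule on $R(\psi) \otimes R(\pi)$.

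The main obstacle is this last identification: one has to check that the canonical copy of $V(\tau_1+\tau_2)$ that the $G$-Cartan map produces inside $V(\lambda_1+\lambda_2)|_K$ from $V(\tau_1) \otimes V(\tau_2)$ coincides, as a $K$-subrepresentation, with the copy prescribed by the $K$-Cartan product used to define multiplication in $R(\pi)$. Both are $K$-equivariant and have image isomorphic to $V(\tau_1+\tau_2)$, so by Schur's lemma the question reduces to tracking a single scalar, which is pinned to $1$ by checking that under both maps the highest $K$-weight vectors $v_{\tau_1}, v_{\tau_2}$ are sent to the same nonzero highest $K$-weight vector of $K$-weight $\tau_1 + \tau_2$ in the image.
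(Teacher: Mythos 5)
Your argument is correct and is essentially the standard one: the paper itself gives no proof of this theorem (it defers to \cite{M1} and \cite{M2}), and what you write---reduce to pure tensors $f_i = a_i \otimes b_i$, use that every $K$-constituent of $V(\tau_1)\otimes V(\tau_2)$ has highest weight $\leq \tau_1+\tau_2$ with the Cartan component occurring with multiplicity one, and then pin down the associated-graded identification by Schur's lemma on highest weight vectors---is the same mechanism used there. The one point worth stating a bit more carefully is that $f_1\otimes f_2$ factors through the full $\tau_i$-isotypic components (possibly several copies of $V(\tau_i)$ each), not a single chosen copy, but this changes nothing since the multiplicity spaces are exactly the $Hom_K$ factors in your factorization.
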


The same construction can be made for any chain of group morphisms. 

$$
\begin{CD}
H @>\phi_1>> K_1 @>\phi_2>> \ldots @>\phi_m>> K_m @>\phi_{m+1}>> G\\
\end{CD}
$$

\noindent
This results in a multifiltration of $R(\phi)$ by tuples of dominant weights from $K_1, \ldots K_m.$
We can turn this into a valuation in a number of ways, each given by a choice of functional $\rho_i$ 
on the dominant weights for each $K_i$ which assigns the largest number to $\tau_1 + \tau_2$ in the sum 
in the theorem above.  There is a cone $B(\vec{\phi})$ of these functionals, its properties are discussed
in \cite{M1}.   The cones for different factorizations $\vec{\phi}$ fit together into a complex $K_{\phi}$ of valuations
on $R(\phi),$ this is also discussed in \cite{M1}.   The important point here is that this complex inherits combinatorial
properties naturally from the category of reductive groups.

In order to evaluate a valuation $v_{\vec{\rho}}$ on an element $f \in R(\phi)$ one must compute the branching diagram of $f$.   This means we must decompose $f$ into its homogenous components along
the multifiltration as in Equation \ref{decompose}.  This entails recording the dominant weights of $f$
considered as a vector in a representation of each group $K_i.$  Once this is done, the value of the valuation
is given by applying the functional $\vec{\rho}$ to the resulting tuple of dominant weights.
In section \ref{tableaux} below we do this for our tree valuations and members of the basis of $R_{GL_n(\C)}$ defined by the semistandard tableaux.

\section{dissimilarity vectors and semistandard tableaux}\label{tableaux}

In this section we compute branching diagrams for the basis $B(\lambda) \subset V(\lambda)$
of semistandard tableaux of an irreducible representation of $GL_n(\C)$ for the branching by given by an $n+1$ tree $\tree$ constructed in the introduction.   We then explain how to produce a valuation $w_{\tree}$ on $R(1_G)$ from a metric on $\tree,$ and compute this valuation on members of the basis $B(\lambda),$ this will prove Theorem \ref{dissim} and Corollary \ref{tab}.

We begin by computing the branching diagrams of the exterior forms $z_{\sigma},$ $|\sigma| = m,$ which constitute a basis of $\bigwedge^m(\C^n).$  This representation branches in a very simple way over the subgroup $GL_k(\C)\times GL_{n-k}(\C) \subset GL_n(\C)$ corresponding to the decomposition $\C^n = \C^k \oplus \C^{n-k}.$ We have the following. 

\begin{equation}
\bigwedge^m(\C^n) = \bigoplus_{i + j = m} \bigwedge^i(\C^k) \otimes \bigwedge^k(\C^{n-k})\\
\end{equation}

Let $I and J$ be the subsets of the index set $[n]$ defined by the above direct sum decomposition, then in terms of the exterior form basis of each space involved we have,

\begin{equation}
z_{\sigma} = z_{\sigma \cap I} \wedge z_{\sigma \cap J} \subset \bigwedge^{|\sigma \cap I|}(\C^k) \otimes \bigwedge^{|\sigma \cap J|}(\C^{n-k}).\\
\end{equation}

\noindent
The result of this step is an exterior form $z_{\sigma \cap I}$ on each new branch of the diagram, so the rest of the branching diagram is computed in the same way.  This observation proves the following. 

\begin{proposition}
For an $n+1$ tree $\tree,$ an edge $e \in E(\tree),$ and an exterior form $z_{\sigma} \in \bigwedge^m(\C^n),$ the branching diagram of $z_{\sigma}$ has a tableaux of shape 
$[1, \ldots, 1, 0, \ldots 0]$ at $e,$ where the number of $1'$s is equal to $|L(e) \cap \sigma|,$
and it is thought of as a dominant weight for $GL_{|L(e)|}(\C).$
\end{proposition}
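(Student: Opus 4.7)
The plan is to induct along $\tree$ starting from the root $0$, propagating the branching data outward edge by edge, with the binary decomposition
\[
\bigwedge^m(\C^k \oplus \C^{n-k}) = \bigoplus_{i+j=m} \bigwedge^i(\C^k) \otimes \bigwedge^j(\C^{n-k})
\]
as the only technical ingredient. The inductive hypothesis to carry is the following: at each edge $e \in E(\tree)$, the component of $z_\sigma$ extracted by the multifiltration so far is exactly the exterior form $z_{\sigma \cap L(e)} \in \bigwedge^{|\sigma \cap L(e)|}(\C^{|L(e)|})$, viewed as a vector in the representation of the subgroup $i_{L(e)}(GL_{|L(e)|}(\C))$ assigned to $e$ by the factorization.

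For the base case, at the root vertex $z_\sigma$ is simply itself as a vector in $\bigwedge^m(\C^n) = V(\omega_m)$, and no branching has yet occurred. For the inductive step, consider an internal vertex $v$ with incoming edge $e$ (pointing away from the root) and outgoing edges $e_1, \ldots, e_j$; by hypothesis we have extracted $z_{\sigma \cap L(e)} \in \bigwedge^s(\C^{|L(e)|})$ with $s = |\sigma \cap L(e)|$. The subgroup attached to $v$ is $GL_{k_1}(\C) \times \cdots \times GL_{k_j}(\C)$ with $k_i = |L(e_i)|$, acting on $\C^{|L(e)|} = \bigoplus_i \C^{k_i}$ along the partition $L(e) = \bigsqcup_i L(e_i)$. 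Iterating the binary decomposition, $z_{\sigma \cap L(e)}$ is the pure wedge $z_{\sigma \cap L(e_1)} \wedge \cdots \wedge z_{\sigma \cap L(e_j)}$, which lies entirely in the single summand
\[
\bigwedge^{|\sigma \cap L(e_1)|}(\C^{k_1}) \otimes \cdots \otimes \bigwedge^{|\sigma \cap L(e_j)|}(\C^{k_j}).
\]
Thus at each child edge $e_i$ the vector $z_{\sigma \cap L(e_i)}$ is inherited, closing the induction.

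To finish, recall that $\bigwedge^s(\C^k)$ is the irreducible $GL_k(\C)$-representation of highest weight $\omega_s$, whose Young diagram is a single column of $s$ boxes, equivalently the dominant weight $(1, \ldots, 1, 0, \ldots, 0)$ with $s$ ones and $k - s$ zeros. So the branching diagram records at $e$ a tableau of shape $[1, \ldots, 1, 0, \ldots, 0]$ with $|L(e) \cap \sigma|$ ones, as claimed. There is essentially no obstacle beyond this bookkeeping: the only substantive input is the binary decomposition identity, which the paper states immediately before the proposition, and the key observation that a pure wedge in an exterior power of a direct sum sits in a single tensor summand is immediate from that identity applied to the factorization $z_\sigma = z_{\sigma \cap I} \wedge z_{\sigma \cap J}$.
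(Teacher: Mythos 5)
Your proof is correct and follows essentially the same route as the paper: the binary decomposition $\bigwedge^m(\C^k \oplus \C^{n-k}) = \bigoplus_{i+j=m} \bigwedge^i(\C^k) \otimes \bigwedge^j(\C^{n-k})$ together with the observation that the pure wedge $z_{\sigma} = z_{\sigma \cap I} \wedge z_{\sigma \cap J}$ sits in a single summand, iterated down the tree. The paper states this more tersely (``the rest of the branching diagram is computed in the same way''), whereas you have made the induction and the handling of higher-valence vertices explicit, but the content is the same.
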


\begin{figure}[htbp]
\centering
\includegraphics[scale = 0.65]{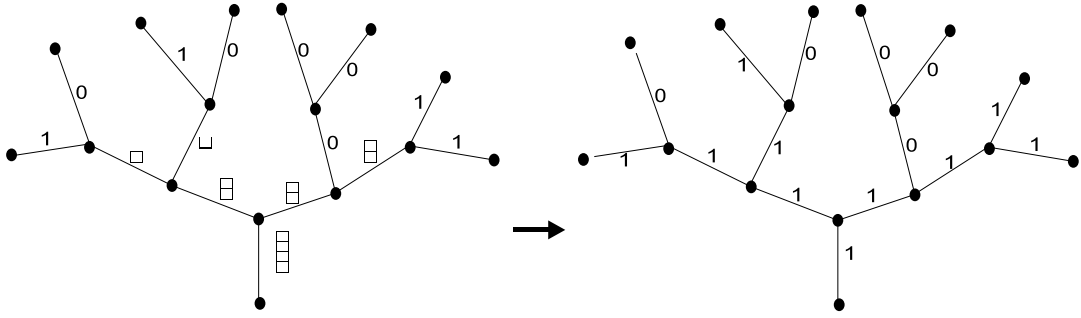}
\caption{trivalent tree}
\label{fig:8}
\end{figure}

For $T$ a semistandard tableaux, we can represent $z_T \in V(\lambda), |T| = \lambda$
as a tensor product of exterior forms.  Let $T = [\sigma_1, \ldots, \sigma_k],$ then there is a surjection $\pi_{\lambda}:V(\omega_{|\sigma_1|})\otimes \ldots \otimes V(\omega_{|\sigma_k|}) \to V(\lambda),$ such that the following holds. 

\begin{equation}
z_T = \pi_{\lambda}(z_{\sigma_1}\otimes \ldots \otimes z_{\sigma_k})\\
\end{equation}

\begin{corollary}\label{c}
For an $n+1$ tree $\tree,$ and  element $z_T$ the branching diagram of $z_T$ is the sum of the branching diagrams
for the $z_{\sigma_i},$ defined above. 
\end{corollary}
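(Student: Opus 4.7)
The plan is to reduce Corollary \ref{c} to the multiplicative compatibility of the multifiltration on $R(1_G)$ given by the theorem of section \ref{br}, combined with the observation that $\pi_\lambda$ is itself the Cartan (highest weight) product.

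First I would note that multiplication in $R(1_G) = \bigoplus_\lambda V(\lambda)$ is given on each graded piece by the Cartan product $V(\eta)\otimes V(\mu) \to V(\eta+\mu)$, which is by construction the projection onto the highest weight summand. Consequently the $GL_n(\C)$-equivariant surjection $\pi_\lambda$ displayed just before the corollary realizes $z_T$ as the honest algebra product $z_{\sigma_1}\cdots z_{\sigma_k}$ in $R(1_G)$.

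Next I would fix an edge $e \in E(\tree)$ and invoke the theorem of section \ref{br} for the Levi factorization at $e$, namely $GL_{|L(e)|}(\C)\times GL_{n-|L(e)|}(\C) \hookrightarrow GL_n(\C)$. That theorem asserts that a product of elements whose diagram weights at $e$ are $\tau_1,\ldots,\tau_k$ lies in the multifiltration piece indexed by weights $\tau \leq \tau_1 + \cdots + \tau_k$, and that its top weight component agrees with multiplication in the tensor product branching algebra. Since $\pi_\lambda$ is precisely the projection onto the maximal weight summand $V(\sum \omega_{|\sigma_i|})$, the element $z_T$ stays in the top weight piece of this refined multifiltration at $e$; combined with the preceding proposition, which identifies the weight of $z_{\sigma_i}$ at $e$ as $\omega_{|L(e)\cap \sigma_i|}$, this gives $\sum_i \omega_{|L(e)\cap\sigma_i|}$ as the weight of $z_T$ at $e$, which in Young diagram terms is the edgewise sum of the column diagrams of the $z_{\sigma_i}$.

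Iterating edge by edge — or, more efficiently, applying the theorem of section \ref{br} to the entire chain of Levi subgroups encoded by $\tree$ — then yields that the branching diagram of $z_T$ at every edge is the sum of the branching diagrams of the $z_{\sigma_i}$, which is the content of the corollary. The main obstacle will be verifying that the top weight projection at the root of $\tree$ simultaneously lands in the top weight piece at every interior edge; this is resolved by observing that a $GL_n(\C)$-highest weight vector in $V(\lambda)$ is automatically a highest weight vector for each of the Levi subgroups appearing in the tree factorization, so $\pi_\lambda(z_{\sigma_1}\otimes\cdots\otimes z_{\sigma_k})$ sits in the top weight component of the full multifiltration over $\tree$, not merely at the root.
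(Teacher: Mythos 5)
Your overall strategy is the same as the paper's: the paper proves Corollary \ref{c} by a one-line appeal to the multiplicativity theorem of Section \ref{br} (that $W(\eta_1,\tau_1,\lambda_1)\cdot W(\eta_2,\tau_2,\lambda_2)$ lands in $\bigoplus_{\tau\leq\tau_1+\tau_2}W(\eta_1+\eta_2,\tau,\lambda_1+\lambda_2)$, with the top component computed by multiplication in the tensor product of branching algebras), after observing that $\pi_\lambda$ is the Cartan product so that $z_T=z_{\sigma_1}\cdots z_{\sigma_k}$ as an honest product in $R(1_G)$. Your first two paragraphs reproduce exactly this reduction, and you are right that this is the content of the citation in the paper's proof.

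There is, however, a genuine flaw in your final paragraph. You correctly isolate the one nontrivial point --- that the product could a priori fall into a strictly lower piece $\tau<\tau_1+\cdots+\tau_k$ of the multifiltration at some edge, so one must check that the top-weight component of $z_T$ is nonzero there --- but your proposed resolution does not work: $z_T=\pi_\lambda(z_{\sigma_1}\otimes\cdots\otimes z_{\sigma_k})$ is \emph{not} a highest weight vector of $V(\lambda)$ for a general semistandard $T$ (only the tableau whose $i$-th row is filled entirely with $i$'s gives the highest weight vector), so the observation that $GL_n(\C)$-highest weight vectors restrict to Levi highest weight vectors simply does not apply to $z_T$. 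The correct way to close the gap is already implicit in the theorem you invoke: the projection of the product onto the top component agrees with multiplication in the associated graded algebra $R(\psi)\otimes R(\pi)$, which is a tensor product of branching algebras over $\C$ and hence a domain; therefore the top component of a product of nonzero elements is the product of their nonzero leading terms and cannot vanish. (This non-degeneracy is exactly what makes $w_{\tree}$ a valuation rather than merely a filtration.) Concretely, each $z_{\sigma_i}$ restricts at every edge to a single isotypic block, since $z_{\sigma}=z_{\sigma\cap I}\wedge z_{\sigma\cap J}$ is decomposable, and the induced multiplication of these blocks is the Cartan product for the Levi factors, which never annihilates a product of nonzero vectors. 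With that substitution in place of the highest-weight-vector claim, your argument is complete and coincides with the paper's.
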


\begin{proof}
This follows from the general version of Theorem \ref{dissim} above.
\end{proof}

This corollary establishes enough for us to evaluate our branching valuations on the basis
$z_T \in B(\lambda) \subset V(\lambda).$   Once again following section \ref{br}, we now produce the valuation $w_{\tree}$ associated to a metric tree $\tree.$  From the previous section, it suffices to assign a coweight to each edge $e \in E(\tree).$

\begin{definition}
Let $\rho$ be the coweight which counts the number of boxes in the first row of the tableaux representing a weight $\lambda.$  The valuation $w_{\tree}$ is defined by assigning $\ell(e) \rho$ to the edge $e$, where $\ell(e)$ is the length of the edge $e \in E(\tree).$
\end{definition}

It is then clear that $w_{\tree}(z_{\sigma})$ gives the sum of the lengths of the edges $e$
which appear in the combinatorial convex hull of the indices $\sigma \cup \{0\},$ as these are the only edges with non-0 branching weight, and the coweight $\rho$ assigns each non-zero weight in this diagram a $1.$  This proves Theorem \ref{dissim}. 




Even though we are taking only some of the components of the dissimilarity vectors of an $n+1$ tree $\tree,$ we can still recover the structure of $\tree$ from this information.  In fact, it is enough to just have the $v_{\tree}$ evaluations of the degree $1$ and degree $2$ Pl\"ucker coordinates.   As defined, this set already includes all of the $2$-components $d_{0,j},$ and it is simple to verify the following.

\begin{equation}
d_{i,j} = 2d_{0, i, j} - d_{0, i} - d_{0, j}\\
\end{equation}

\noindent
Since the $2$-disimilarity vector of a tree completely determines the metric structure, this shows that $w_{\tree}(z_i)$ and $w_{\tree}(z_i \wedge z_j)$ determine $\tree.$
Theorem \ref{dissim} above also implies that the rooted dissimilarity numbers $d_{0, \sigma}$ satisfy many combinatorial tropical equations. 




\bigskip
\noindent
Christopher Manon:\\
Department of Mathematics,\\ 
University of California, Berkeley,\\ 
Berkeley, CA 94720-3840 USA, 

\end{document}